\theoremstyle{plain}
 \newtheorem{thm}{\textbf{Theorem}}[section]
 \newtheorem{lem}{\textbf{Lemma}}[section]
\theoremstyle{definition}
\theoremstyle{remark}
 \numberwithin{equation}{section}
\renewcommand{\leq}{\leqslant}
\renewcommand{\geq}{\geqslant}
\renewcommand{\setminus}{\smallsetminus}
\title{Density of Polynomials in Sub-Bergman Hilbert Spaces}
\subjclass[2010]{Primary 47B32}
\author[Chu]{\bfseries Cheng Chu}
\address{
Department of Mathematics \\ 
Vanderbilt University  \\ 
Nashville, Tennessee \\
USA}
\email{cheng.chu@vanderbilt.edu}
\begin{document}

\vspace{18mm}
\setcounter{page}{1}
\thispagestyle{empty}

\begin{abstract}
The sub-Bergman Hilbert spaces are analogues of de Branges-Rovnyak spaces in the Bergman space setting. We prove that the polynomials are dense in sub-Bergman Hilbert spaces. This answers the question posted by Zhu in the affirmative.
\end{abstract}

\maketitle

\section{Introduction}  

Let $A$ be a bounded operator on a Hilbert space $H$. We define the range space $\cM(A)=AH$, and endow it with the inner product
$$\langle Af, Ag \rangle_{\mathcal{M}(A)}=\langle f, g \rangle_{H},\qq f,g\in H \ominus \m{Ker}A.$$

Let $\DD$ denote the unit disk.
Let $L^2$ denote the Lebesgue space of square integrable functions on the unit circle $\partial\DD$. The Hardy space $H^2$ is the subspace of analytic functions on $\DD$ whose Taylor coefficients are square summable. Then it can also be identified with the subspace of $L^2$ of functions whose negative Fourier coefficients vanish.
The Toeplitz operator on the Hardy space $H^2$ with symbol $f$ in $L^\infty(\DD)$ is defined by
$$T_f (h) = P(fh),$$ for $h\in H^2(\DD)$. Here $P$ be the orthogonal projections from $L^2$ to $H^2$.

Let $b$ be a function in the closed unit ball of $H^\infty(\DD)$, the space of bounded analytic functions on the unit disk. The de Branges-Rovnyak space $\cH(b)$ is defined to be the space $(I-T_b T_{\bar b})^{1/2} H^2$.
We also define the space $\cH(\bar b)$ in the same way as $\cH(b)$, but with the roles of $b$ and $\bar{b}$ interchanged, i.e. $$\cH(\bar{b})=(I-T_{\bar b} T_b)^{1/2} H^2.$$ The spaces $\cH(b)$ and $\cH(\bar b)$ are also called sub-Hardy Hilbert spaces (the terminology comes from the title of Sarason's book \cite{sar94}).

The Bergman space $A^2$ is the space of analytic functions on $\DD$ that are square-integrable with respect to the normalized Lebesgue area measure $dA$. For $u\in L^\infty(\DD)$, the Bergman Toeplitz
operator $\tiT_u$ with symbol $u$ is the operator on $L^2_a$ defined by $$\tiT_u h = \tiP(uh).$$ Here $\tiP$ is the orthogonal projection from $L^2(\DD, dA)$ onto $A^2$.

In \cite{zhu96}, Zhu introduced the sub-Bergman Hilbert spaces. They are defined by
$$\cA(b)=(I-\tiT_b \tiT_{\bar b})^{1/2}A^2$$ and $$\cA(\bar{b})=(I-\tiT_{\bar b} \tiT_b)^{1/2}A^2.$$ Here $b$ is a function in the closed unit ball of $H^\infty(\DD)$. It is easy to see from the definition that the spaces $\cA(b)$ and $\cA(\bar{b})$ are contractively contained in $A^2$. But in most cases they are not closed subspaces of $A^2$ (see \cite{zhu96}*{Corollary 3.13}). Some examples of $\cA(b)$ are known: if $||b||_\infty<1$, then $\cA(b)$ is just a renormed version $A^2$; if $b$ is a finite Blaschke product, then $\cA(b)=H^2$ (\cite{zhu03}, \cite{sul06}).

Sub-Bergman Hilbert spaces share some common properties with sub-Hardy Hilbert spaces as the way those spaces are defined follows from a general theory on Hilbert space contractions \cite{sar94}. For instance, both $\cH(b)$ and $\cA(b)$ are invariant under the corresponding Toeplitz operators with a co-analytic symbol (\cite{sar94}*{II-7}).
One significant difference between the spaces $\cA(b)$ and $\cH(b)$ is the multipliers. The theory of $\cH(b)$ spaces is pervaded by a fundamental dichotomy, whether $b$ is an extreme point of the unit ball of $H^\infty(\DD)$. The multiplier structure of de Branges-Rovnyak spaces has been studied extensively by Lotto and Sarason in both the extreme and the nonextreme cases (\cite{lot90}, \cite{lotsar91}, \cite{lotsar93}). However, Zhu \cite{zhu96} showed that every function in $H^\infty$ is a multiplier of $\cA(b)$ and $\cA(\bar b)$. As a consequence, the two sub-Bergman Hilbert spaces $\cA(b)$ and $\cA(\bar{b})$ are norm equivalent.

\begin{thm}\label{=}
For $b\in H^\infty$ with $||b||_\infty\leq 1$,
$$\cA(b)=\cA(\bar{b}).$$
\end{thm}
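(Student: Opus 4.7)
The plan is to prove the two set-inclusions $\cA(\bar b) \subseteq \cA(b)$ and $\cA(b) \subseteq \cA(\bar b)$ separately, each via Douglas's range-inclusion theorem. The two directions require different ingredients: hyponormality of $\tiT_b$ on $A^2$ for the forward inclusion, and the $H^\infty$ multiplier theorem of Zhu (quoted in the preceding paragraph) for the reverse one.

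For $\cA(\bar b) \subseteq \cA(b)$, a direct computation, exploiting that $\tiT_b = M_b$ on $A^2$ (since $b \in H^\infty$) together with the decomposition $L^2(\DD, dA) = A^2 \oplus (L^2 \ominus A^2)$, yields the commutator identity
\[
\tiT_{\bar b}\tiT_b - \tiT_b\tiT_{\bar b} = H_{\bar b}^* H_{\bar b} \geq 0,
\]
where $H_{\bar b}: A^2 \to L^2(\DD,dA) \ominus A^2$ is the Hankel operator $H_{\bar b} f = (I - \tiP)(\bar b f)$. This is precisely hyponormality of $\tiT_b$, and it gives $I - \tiT_b \tiT_{\bar b} \geq I - \tiT_{\bar b} \tiT_b$ as positive operators. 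Douglas's theorem then furnishes the contractive inclusion $\cA(\bar b) \subseteq \cA(b)$.

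For the reverse inclusion $\cA(b) \subseteq \cA(\bar b)$, the plan is to invoke Zhu's multiplier theorem for $b$ itself acting on $\cA(\bar b)$. Via Sarason's factorization characterization of multipliers of a range space, this produces a bounded operator $X$ on $A^2$ with $\tiT_b (I - \tiT_{\bar b}\tiT_b)^{1/2} = (I - \tiT_{\bar b}\tiT_b)^{1/2} X$, or equivalently the operator inequality
\[
\tiT_b (I - \tiT_{\bar b}\tiT_b) \tiT_{\bar b} \leq C^2 (I - \tiT_{\bar b}\tiT_b).
\]
Combining this with the commutator identity above and the fact that $\tiT_{\bar b}\tiT_b = \tiT_{|b|^2}$ (valid since $b \in H^\infty$), I would aim to extract a defect-operator comparison $I - \tiT_b\tiT_{\bar b} \leq K(I - \tiT_{\bar b}\tiT_b)$, and conclude by a second application of Douglas.

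The main obstacle is the algebraic transition in the last step: the multiplier inequality only controls $\tiT_b\tiT_{\bar b} \cdot (I - \tiT_b\tiT_{\bar b})$, so passing to the bare comparison of defect operators $I - \tiT_b\tiT_{\bar b} \leq K(I - \tiT_{\bar b}\tiT_b)$ requires additional input. Natural candidates are the symmetric multiplier estimate (using that $b$ also multiplies $\cA(b)$, yielding a second Sarason factorization to combine with the first), or finer structural information about $H_{\bar b}$, which is Hilbert--Schmidt on $A^2$ for $b \in H^\infty$. This is where the substance of the proof will lie.
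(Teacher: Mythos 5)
Your first inclusion, $\cA(\bar b)\subseteq\cA(b)$, is exactly the paper's argument: the identity $\tiT_{\bar b}\tiT_b-\tiT_b\tiT_{\bar b}=H_{\bar b}^*H_{\bar b}\geq 0$ gives $I-\tiT_{\bar b}\tiT_b\leq I-\tiT_b\tiT_{\bar b}$, and Douglas's criterion converts this into the contractive containment. That half is complete and correct.

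The second inclusion is where the genuine gap lies, and you have located it accurately yourself. From the multiplier factorization you only obtain $\tiT_b\tiT_{\bar b}\,(I-\tiT_b\tiT_{\bar b})\leq C^2(I-\tiT_{\bar b}\tiT_b)$; writing $P=\tiT_b\tiT_{\bar b}$ and decomposing $I-P=P(I-P)+(I-P)^2$, the leftover term $(I-P)^2$ admits only the useless bound $(I-P)^2\leq I-P$, so the manipulation is circular and the target comparison $I-\tiT_b\tiT_{\bar b}\leq K(I-\tiT_{\bar b}\tiT_b)$ does not follow from what you have. The paper closes this direction by abandoning operator inequalities and working at the level of individual vectors, via Sarason's identity I-8 for a contraction and its adjoint: with $A=\tiT_{\bar b}$, a function $f$ belongs to $\cA(\bar b)$ if and only if $bf\in\cA(b)$, in which case $\|f\|^2_{\cA(\bar b)}=\|f\|^2_{A^2}+\|bf\|^2_{\cA(b)}$. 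Combined with the very ingredient you list as a ``natural candidate'' --- Zhu's theorem that $b$ is a multiplier of $\cA(b)$ (note: of $\cA(b)$, not of $\cA(\bar b)$ as in your setup) with multiplier norm at most $\|b\|_\infty$ --- this immediately yields: $f\in\cA(b)$ implies $bf\in\cA(b)$, hence $f\in\cA(\bar b)$ with $\|f\|^2_{\cA(\bar b)}\leq(1+\|b\|_\infty^2)\|f\|^2_{\cA(b)}$. The missing idea is thus the ``if and only if'' of I-8, which converts the multiplier property of $b$ on $\cA(b)$ directly into membership in $\cA(\bar b)$, bypassing the bare defect-operator inequality altogether.
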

\begin{proof}
The relation $$||f||_{\cA(b)}\leq||f||_{\cA(\bbb)}$$ follows from Douglas's criterion \cite{sar94}*{I-5} and the operator inequality (see e.g. \cite{zhu}*{p. 106})
$$\tiT_b\tiT_{\bbb}\leq \tiT_{\bbb}\tiT_b.$$
On the other hand, let $f\in \cA(\bbb)$. By \cite{zhu96}*{Theorem 3.12}, $b$ is a multiplier of $\cA(b)$ with multiplier norm less than or equal to $||b||_\infty$.
Then
$$
||f||^2_{\cA(\bbb)}=||f||^2_{A_2}+||bf||^2_{\cA(b)}\leq ||f||^2_{\cA(b)}+||b||^2_\infty||f||^2_{\cA(b)}\leq 2||f||^2_{\cA(b)}.
$$
Here we used the identity \cite{sar94}*{I-8}.
\end{proof}

In the Hardy space setting, the polynomials belong to $\cH(b)$ if and only if $b$ is non-extreme, and in this case polynomials are dense in $\cH(\bar b)$ and $\cH(b)$ (see \cite{sar94}*{Chapter IV, V}).  For the sub-Bergman Hilbert spaces,  Zhu in \cite{zhu96} showed that $\cA(b)$ always contain $H^\infty$, which includes the polynomials. In the same paper, Zhu asked whether the polynomials are dense in $\cA(b)$ or $\cA(\bbb)$. In this note, we answer this question in the affirmative.

\section{Proof the Main Result}

In the theory of $\cH(b)$ spaces, $\cH(\bar b)$ is often more amenable than $\cH(b)$ because of a representation theorem for $\cH(\bar b)$ \cite[III-2]{sar94}. We shall prove a similar version for $\cA(\bar b)$
\begin{thm}\label{rep}
Let $b\in H^\infty $ with $||b||_\infty\leq 1$. Let $L^2_b$ denote the space $L^2(\DD, (1-|b|^2)dA)$. Let $A^2_b$ be the closure of polynomials in $L^2_b$. Define the operator $S_b$ by
\beq\label{s}S_bg=\tiP((1-|b|^2) g).\eeq
 Then $S_b$ is an isometry from $A^2_b$ onto $\cA(\bar b)$.
\end{thm}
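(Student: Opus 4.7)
The plan is to mimic the Hardy-space representation theorem of Sarason (\cite{sar94}*{III-2}) by first identifying the defect operator $I-\tiT_{\bar b}\tiT_b$ as an explicit integral-type operator, then checking isometry on the dense subset of polynomials, and finally obtaining surjectivity from density. Write $D=(I-\tiT_{\bar b}\tiT_b)^{1/2}$, a positive contraction on $A^2$; by definition $\cA(\bar b)=D(A^2)$ is endowed with the norm making $D$ a unitary from $(\ker D)^\perp$ onto $\cA(\bar b)$. Since $b\in H^\infty$ yields $\tiT_b h=bh$ on $A^2$ and $\tiT_{\bar b}=(\tiT_b)^*$, one computes for $f,g\in A^2$ that
$$\langle(I-\tiT_{\bar b}\tiT_b)f,g\rangle_{A^2}=\int_{\DD}(1-|b|^2)f\bar g\,dA,$$
and therefore $D^2f=\tiP\bigl((1-|b|^2)f\bigr)=S_bf$ for every $f\in A^2$. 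In particular this identity holds on polynomials, which lie in both $A^2$ and $L^2_b$.

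The isometric property on polynomials follows quickly. For any polynomial $p$, set $h=Dp$; since $h\in \mathrm{range}(D)\subseteq(\ker D)^\perp$, the defining property of the $\cA(\bar b)$-norm gives $\|S_b p\|_{\cA(\bar b)}=\|Dh\|_{\cA(\bar b)}=\|h\|_{A^2}$, and
$$\|h\|_{A^2}^2=\langle D^2 p,p\rangle_{A^2}=\int_{\DD}(1-|b|^2)|p|^2\,dA=\|p\|_{L^2_b}^2.$$
Since polynomials are dense in $A^2_b$ by definition, $S_b$ extends uniquely to an isometry $A^2_b\to\cA(\bar b)$, which continues to equal $\tiP\bigl((1-|b|^2)\cdot\bigr)$ wherever both formulas make sense.

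For surjectivity, note that the range of an isometry between Hilbert spaces is closed, so it suffices to check density of the range of $S_b$ in $\cA(\bar b)$. Any element of $\cA(\bar b)$ has the form $Df$ with $f\in(\ker D)^\perp=\overline{\mathrm{range}(D)}$, after replacing $f$ by its projection off $\ker D$ if necessary. Since polynomials are dense in $A^2$ and $D$ is bounded on $A^2$, the set $\{Dp:p\text{ polynomial}\}$ is dense in $D(A^2)$ and hence in $(\ker D)^\perp$. Choosing polynomials $p_n$ with $Dp_n\to f$ in $A^2$ and reapplying the isometric identity,
$$\|S_b p_n-Df\|_{\cA(\bar b)}=\|D(Dp_n-f)\|_{\cA(\bar b)}=\|Dp_n-f\|_{A^2}\to 0,$$
placing $Df$ in the closure of $S_b(A^2_b)$. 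The only subtle part of the argument is the bookkeeping between the two norms via the unitary $D:(\ker D)^\perp\to\cA(\bar b)$; once that correspondence is laid out, the rest is routine.
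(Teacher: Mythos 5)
Your argument is correct, and it takes a genuinely different route from the paper's, although both rest on the same core computation $\langle(I-\tiT_{\bar b}\tiT_b)f,g\rangle_{A^2}=\int_{\DD}(1-|b|^2)f\bar g\,dA$. The paper regards $S_b$ as an operator from all of $L^2_b$ into $A^2$, identifies $S_b^*$ as the natural inclusion $A^2\hookrightarrow L^2_b$, deduces $S_bS_b^*=I-\tiT_{\bar b}\tiT_b$, and then invokes Douglas's criterion to obtain $\cA(\bar b)=S_bL^2_b$ with the norm identity and surjectivity in one stroke; the initial space is then pinned down by computing $\ker S_b=(A^2_b)^{\perp}$ via the reproducing kernels $k_w$. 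You instead stay on $A^2$, observe that $D^2=S_b$ there for $D=(I-\tiT_{\bar b}\tiT_b)^{1/2}$, verify the isometric identity $\|S_bp\|_{\cA(\bar b)}=\|p\|_{L^2_b}$ directly on polynomials, extend by density (this is where the definition of $A^2_b$ as the closure of the polynomials enters for you), and prove surjectivity by hand from the closedness of the range of an isometry together with the $A^2$-density of $\{Dp\}$ in $(\ker D)^{\perp}$. What Douglas's criterion buys the paper is the norm identity and surjectivity simultaneously, at the cost of the kernel computation; your version is more elementary and self-contained but needs the extra density-of-range step. Two points you assert without full justification and should spell out in a final write-up: first, that the continuous extension of $S_b$ from polynomials to $A^2_b$ still agrees with the formula $\tiP((1-|b|^2)\,\cdot\,)$ --- this follows because $\|\tiP((1-|b|^2)q)\|_{A^2}\le\|q\|_{L^2_b}$ and $\cA(\bar b)$ is contractively contained in $A^2$, so both candidate limits can be compared in $A^2$; second, that the identity $\|D(Dp_n-f)\|_{\cA(\bar b)}=\|Dp_n-f\|_{A^2}$ in the surjectivity step requires $Dp_n-f\in(\ker D)^{\perp}$, which holds since both terms lie in $(\ker D)^{\perp}$.
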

\begin{proof}
For any $q\in L_b^2$, $g\in A^2$, we have
$$
\la S_bq, g \ra_{A^2}=\la \tiP((1-|b|^2) q, g  \ra_{A^2}=\int_{\DD} q\bg (1-|b|^2)dA=\la q, g\ra_{L_b^2},
$$
which implies $S_b^*$ is the natural inclusion from $A^2$ into $L_b^2$. Then for any $f, g\in A^2$,
$$
\la S_bS_b^*f, g\ra_{A^2}=\la S_b^*f, S_b^* g\ra_{L_b^2}=\la f, g\ra_{L_b^2}=\int_{\DD} f\bg (1-|b|^2)dA=\la \tiT_{1-|b|^2}f, g\ra_{A^2}.
$$
Thus $$S_bS_b^*=I-\tiT_{\bar b} \tiT_b.$$ By Douglas's Criterion, $$\cA(\bar{b})=S_b L^2_b.$$
Notice that every $q\in L_b^2$, $$(S_b q)(w)=\la q, k_w\ra_{L_b^2},$$ where $k_w(z)=\frac{1}{(1-\bw z)^2}$ is the reproducing kernel of $A^2$.
We see that $$Ker (S_b)=(Span_{w\in \DD}\{k_w\})^{\perp}=(A_b^2)^{\perp}.$$
Hence $S_b$ is an isometry from $A^2_b$ onto $\cA(\bar b)$.
\end{proof}

We need the following lemma to prove the main result.

\begin{lem}\label{lem}
Let $b\in H^\infty $ with $||b||_\infty\leq 1$.
Let $M_n$ denote the closure of the linear span of $\{z^k\}_{k=n}^{\infty}$ in $A_b^2$. Let $\cP=\cup_{n\geq 0}(M_n)^{\perp}.$ Then $\cP$ is dense in $A_b^2$.
\end{lem}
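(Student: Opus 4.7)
The plan is to rephrase the density statement as a triviality statement for $\bigcap_n M_n$ and then to kill this intersection using an analyticity argument. Since $M_{n+1}\subseteq M_n$, we have $M_n^\perp\subseteq M_{n+1}^\perp$, so $\cP$ is an increasing union of subspaces and $\overline{\cP}=(\cP^\perp)^\perp=\big(\bigcap_n M_n\big)^\perp$ in $A_b^2$; hence it suffices to prove $\bigcap_n M_n=\{0\}$. We may assume $b$ is not a unimodular constant, for otherwise $1-|b|^2\equiv 0$ and $A_b^2=\{0\}$. In the nontrivial case the maximum modulus principle gives $|b(z)|<1$ on all of $\DD$, so by continuity $1-|b(z)|^2\geq c_K>0$ on every compact $K\subset\DD$.

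Next, I would identify $A_b^2$ with a space of analytic functions on $\DD$ by upgrading $L_b^2$ convergence to locally uniform convergence. The estimate above yields $\int_K|p-q|^2\,dA\leq c_K^{-1}\|p-q\|_{L_b^2}^2$ for all polynomials $p$ and $q$, so a Cauchy sequence of polynomials in $L_b^2$ is Cauchy in $L^2(K,dA)$ on each compact $K\subset\DD$. Applying Cauchy's integral formula on small disks then gives uniform convergence on compacta, so the limit has an analytic representative on $\DD$; moreover, the Taylor coefficients at the origin of the approximating polynomials converge to those of the limit.

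Finally, suppose $f\in\bigcap_n M_n$. For each $n$, choose polynomials $p_k^{(n)}\in\operatorname{span}\{z^j:j\geq n\}$ with $p_k^{(n)}\to f$ in $L_b^2$ as $k\to\infty$. By the previous paragraph, $p_k^{(n)}\to f$ locally uniformly on $\DD$, and comparing Taylor coefficients forces the first $n$ Taylor coefficients of $f$ to vanish. Since $n$ was arbitrary, $f\equiv 0$, which completes the proof. The one step that deserves attention is the passage from $L_b^2$-convergence to locally uniform convergence of the analytic representatives; once that identification is in hand, the coefficient-chasing conclusion is routine.
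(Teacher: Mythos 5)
Your proof is correct. The reduction is the same as the paper's: density of $\cP$ is equivalent to $\bigcap_n M_n=\{0\}$ (the paper phrases this as $f\perp\cP$ implying $f\in M_n$ for all $n$), and both arguments ultimately rest on the fact that $1-|b|^2$ is bounded below on compact subsets of $\DD$ once $b$ is not a unimodular constant. Where you diverge is the key step. The paper argues by contradiction: it takes the lowest nonvanishing Taylor coefficient $a_k$ of a putative $f\in\bigcap_n M_n$, deduces that $z^k\in M_{k+1}$, and then rules this out with the explicit lower bound $\|z^k-g\|_{A_b^2}^2\geq \delta r^{2k+2}/(k+1)$ for every polynomial $g$ supported in degrees $\geq k+1$, using the orthogonality of monomials over the disk $r\DD$. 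You instead show directly that $L_b^2$-convergence of polynomials implies locally uniform convergence of analytic functions, hence convergence of Taylor coefficients, so every coefficient of $f$ is a limit of zeros. The two mechanisms are essentially equivalent (boundedness of the coefficient functionals on analytic functions in $L^2(r\DD,dA)$ is exactly the monomial orthogonality the paper exploits), but your packaging is more direct and has a concrete advantage: it justifies the identification of elements of $A_b^2$ --- a priori just $L_b^2$-limits of polynomials --- with genuine analytic functions admitting power series, a point the paper uses implicitly when it writes ``$f$ has the power series expansion,'' and it avoids the paper's slightly delicate intermediate claim that $f-a_kz^k\in M_{k+1}$. Your handling of the degenerate case (unimodular constant $b$, where $A_b^2=\{0\}$) is also cleaner than the paper's ``if $b$ is a constant, the conclusion is obvious.''
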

\begin{proof}
If $b$ is a constant, the conclusion is obvious. Next, we assume $b$ is not a constant.
Let $f\in A_b^2$ such that $f\perp \cP$. Then $f\in M_n$ for every $n\geq 0$. It is sufficient to show $f=0$.

Suppose $f \in A_{b}^{2} \setminus \{0\}$. Then for some $k \geq 1$, $f$ has the power series expansion
$$f(z) = \sum_{j = k}^{\infty} a_j z^j$$
with $a_k \not = 0$. Since $f \in M_k$, there is a sequence of polynomials $\{p_s\}_{s=1}^{\infty} \subset M_{k}$ for which $$p_s \to f \q\m{in}\q L_{b}^{2}.$$ By the definition of the integral norm,
$$p_s - a_k z^k \to f - a_k z^k \q\m{in}\q L_{b}^{2},$$ and thus $f - a_{k} z^k \in M_{k + 1}$. But since $f \in M_{k + 1}$ and $a_k \not = 0$, it must be the case that $z^k \in M_{k + 1}$.

Fix $r\in (0, 1)$. There exists $\Gd>0$ such that $$1-|b(z)|^2\geq \Gd,$$ for every $z$ in the disk $r\DD$.

Let $$g(z)=\sum_{j=k+1}^{N} b_jz^j, N>k$$ be a polynomial with minimal degree at least $k+1$.
Using the easily established identity
$$
\int_{r\DD} z^m\bar{z^n}dA(z)=
\begin{cases}\frac{r^{2m+2}}{m+1}, & m=n, \cr
0, &m\neq n,
\end{cases}
$$
we have
\begin{align*}
||z^k-g(z)||^2_{A_b^2}&=\int_{\DD} |z^k-g(z)|^2(1-|b(z)|^2)dA(z)\\
&\geq \int_{r\DD} |z^k-g(z)|^2(1-|b(z)|^2)dA(z)\\
&\geq \Gd\int_{r\DD} |z^k-g(z)|^2dA(z)\\
&=\Gd\large(\int_{r\DD} |z^k|^2dA(z)+\int_{r\DD} |g(z)|^2dA(z)\large)\\
&\geq\frac{\Gd }{k+1}r^{2k+2}>0.
\end{align*}
By the definition of $M_{k+1}$, we see that $z^k\notin  M_{k+1}$, which is a contradiction . Thus $f=0$.
\end{proof}

\begin{thm}
Polynomials are dense in $\cA(b)$.
\end{thm}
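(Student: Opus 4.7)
The plan is to reduce to density in $\cA(\bar b)$ via Theorem \ref{=}, transport the problem to $A^2_b$ via the isometry $S_b$ of Theorem \ref{rep}, and then exploit Lemma \ref{lem} together with the crucial observation that $S_b$ sends the subspace $\cP$ of Lemma \ref{lem} into the set of polynomials.

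First, by Theorem \ref{=}, the spaces $\cA(b)$ and $\cA(\bar b)$ agree as sets with equivalent norms, so it suffices to show that polynomials are dense in $\cA(\bar b)$. By Theorem \ref{rep}, $S_b\colon A^2_b \to \cA(\bar b)$ is an onto isometry, so density of polynomials in $\cA(\bar b)$ will follow if I can exhibit a dense subset of $A^2_b$ whose image under $S_b$ consists of polynomials. Lemma \ref{lem} provides the candidate dense subset, namely $\cP = \bigcup_{n\ge 0}(M_n)^{\perp}$, where the orthogonal complement is taken in $A^2_b$.

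The key step is to check that $S_b$ maps each $(M_n)^\perp$ into the polynomials of degree at most $n-1$. For $g\in A^2_b$ and $w\in\DD$, the identity $(S_b g)(w)=\la g,k_w\ra_{L^2_b}$ from the proof of Theorem \ref{rep}, combined with the expansion $\overline{k_w(z)}=\sum_{k\ge 0}(k+1)w^k\bar z^k$, gives
\begin{equation*}
(S_b g)(w)=\sum_{k=0}^{\infty}(k+1)\Bigl(\int_{\DD} g(z)\bar z^k (1-|b|^2)\,dA\Bigr)\,w^k = \sum_{k=0}^{\infty}(k+1)\la g,z^k\ra_{L^2_b}\,w^k.
\end{equation*}
If $g\in (M_n)^\perp$, then $\la g, z^k\ra_{L^2_b}=0$ for every $k\ge n$, so the above series terminates and $S_b g$ is a polynomial of degree less than $n$. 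Consequently $S_b(\cP)$ is contained in the space of polynomials.

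Finally, by Lemma \ref{lem}, $\cP$ is dense in $A^2_b$; since $S_b$ is an isometric surjection onto $\cA(\bar b)$, the image $S_b(\cP)$ is dense in $\cA(\bar b)$. As $S_b(\cP)$ consists of polynomials, polynomials are dense in $\cA(\bar b)$, and hence in $\cA(b)$ by Theorem \ref{=}. The only step requiring genuine work is the coefficient calculation above; everything else is bookkeeping, and the main conceptual hurdle (showing $\cP$ is dense in $A^2_b$) has already been handled by Lemma \ref{lem}.
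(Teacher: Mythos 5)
Your proposal is correct and follows essentially the same route as the paper: reduce to $\cA(\bar b)$ via Theorem \ref{=}, transport to $A^2_b$ via the isometry $S_b$, apply Lemma \ref{lem}, and observe that $S_b$ sends $(M_n)^\perp$ to polynomials of degree less than $n$. Your coefficient computation via the kernel expansion of $k_w$ is just a repackaging of the paper's pairing $\la S_b h, z^k\ra_{A^2}=\la h,z^k\ra_{L^2_b}$ (the series interchange is harmless since $\sum(k+1)|w|^k<\infty$ for fixed $w\in\DD$).
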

\begin{proof}
By Theorem \ref{=}, it suffices to prove polynomials are dense in $\cA(\bar{b})$. Let $f\in \cA(\bbb)$ and let $\Gve>0$. By Theorem \ref{rep}, there exists $g\in A_b^2$ such that $f=S_b g$.
Using Lemma \ref{lem}, we can find $h\in \cP$ with $$||g-h||_{A_b^2}<\Gve.$$
By Theorem \ref{rep}, $S_b: A_b^2\to \cA(\bbb)$ is an isometry. Thus $$||f-S_bh||_{\cA(\bbb)}=||S_b(g-h)||_{\cA(\bbb)}=||g-h||_{A_b^2}<\Gve.$$

It remains to show that $S_bh$ is a polynomial. From the definition of $\cP$, we see that $h\in (M_n)^\perp$, for some $n\geq 0$. Then $$\la h, z^k\ra_{A_b^2}=0,$$ for every $k\geq n$.
If $n=0$, then $h=0$. If $n\geq 1$, using \eqref{s} we have
\begin{align*}
\la S_bh, z^k\ra_{A^2}&=\la\tiP((1-|b|^2)h), z^k\ra_{A_b^2}=\int_{\DD} h(z)\bz^k(1-|b(z)|^2)dA(z)\\
&=\la h, z^k\ra_{A_b^2}=0,
\end{align*}
for every $k\geq n.$ Therefore $S_bh$ is a polynomial of degree at most $n-1$.
\end{proof}

\bibliography{references}
\end{document}